\newcounter{notecounter}
\newtheorem{Th}{Theorem}
\theoremstyle{definition}
\begin{document}

\title{From Farey fractions to the Klein quartic and beyond}

\author{Ioannis Ivrissimtzis$^1$}
\address{$^1$Department of Computer Science, Durham University, DH1 5LE, UK}
\email{ioannis.ivrissimtzis@durham.ac.uk}

\author{David Singerman$^2$}
\author{James Strudwick$^2$}
\address{$^2$Mathematical Sciences, University of Southampton, SO17 1BJ, UK}
\email{D.Singerman@soton.ac.uk, J.Strudwick@soton.ac.uk}

\date{}

\begin{abstract}
In his 1878/79 paper \cite{K1}, Klein produced his famous 14-sided polygon representing the Klein quartic, 
his Riemann surface of genus 3 which has PSL(2,7) as its automorphism group. The construction 
and method of side pairings are fairly complicated. By considering the Farey map modulo 7 we show how 
to obtain a fundamental polygon for Klein's surface using arithmetic. Now the side pairings are immediate 
and essentially the same as in Klein's paper. We also extend this idea from 7 to 11 as Klein attempted to 
do in his follow up paper \cite{K11} in 1879.
\bigskip 

\noindent {\em 2010 Mathematics Subject Classification}. 

\smallskip 

\noindent Primary: 30F10, 20H10; Secondary: 51M20.
\end{abstract}

\maketitle


\section{introduction} 
The Klein quartic was introduced in one of Felix Klein's most famous papers, published in volume 14 of Mathematische Annalen 1878/79 \cite{K1}.  
A slightly updated version appeared in Klein's Collected Works \cite{K}, while for an English translation see the book 
{\it The Eightfold  Way, the beauty of Klein's Quartic curve} edited by Silvio Levy, \cite{L}). This algebraic curve, whose equation is 
$x^3y+y^3z+z^3x=0$, gives the compact Riemann surface of genus 3 with 168 automorphisms, the maximum number by the Hurwitz 
bound. 

Let $\mathbb{H}$ denote the upper-half complex plane and let $\mathbb{H}^*=\mathbb{H}\cup \mathbb{Q}\cup{\ \{\infty\}}.$  
Klein's surface is $\mathbb{H^*}/\Gamma(7)$,  where $\Gamma(7)$ is the principal congruence subgroup mod 7 of the classical 
modular group $\Gamma$=PSL(2, $\mathbb{Z})$. Klein studies the Riemann surface of the Klein quartic by constructing his famous 
14-sided fundamental region with its side identifications. See sections 11 and 12 of \cite{K1} for the construction and between pages 
448 and 449 of \cite{K1}, page 126 of {\cite{K}}, or page 320 of {\cite{L}} for the figure itself. 

Our approach is to construct a fundamental region for Klein's surface using the Farey tessellation ${\mathcal{M}_3}$ of $\mathbb{H}^*$, a 
triangular tessellation of $\mathbb{H}^*$ which we define in \S 2, and which it was shown to be the universal triangular tessellation 
\cite{S}. In \S 3 and \S 4, we study the {\it level n Farey map} $\mathcal{M}_3/\Gamma(n)$, through the correspondence of its directed 
edges with the elements of $\Gamma / \Gamma(n)$ and the correspondence of its vertices with the cosets of $\Gamma_1(n)$ in $\Gamma$ . 
In \S 5 and \S 6, we study the level 7 Farey map $\mathcal{M}_3/\Gamma(7)$. As $\mathcal{M}_3\subset \mathbb{H}^*$,  
$\mathcal{M}_3/\Gamma(7)\subset \mathbb{H^*}/\Gamma(7)$, this Farey map is embedded in the Klein surface. In a sense, we will show 
this Farey map {\it{is}} the Klein surface. 

In \S 7 and \S 8, we review Klein's original construction, computing Farey coordinates on Klein's 14-sided fundamental region 
and discussing the differences between the two approaches.  In volume 15 of Mathematische Annalen in 1879 \cite{K11}, 
Klein extended his work to study the surface  $\mathbb{H}/\Gamma(11)$, which has PSL(2,11) of order 660 as its automorphism 
group and is somewhat more complicated. He was unable to draw a fundamental region for $\mathbb{H}/\Gamma(11)$ as he was for
 $\mathbb{H}/\Gamma(7)$,  however we are able to draw the corresponding Farey map in \S 9.


\section{The Farey map.}

The  vertices of the Farey map $\mathcal{M}_3$ are the extended rationals, i.e. $\mathbb Q\cup\{\infty\}$ and two rationals $\frac{a}{c}$ and $\frac{b}{d}$ are joined by an edge if and only if $ad-bc=\pm 1$. These edges are drawn as semicircles or vertical lines, perpendicular to the real axis, (i.e. hyperbolic lines).
 Here $\infty=\frac{1}{0}$. This map has the following properties. 
\begin{enumerate}
\item[(a)] There is a triangle with vertices  $\frac{1}{0}, \frac{1}{1}, \frac{0}{1}$, called the principal triangle.
\item[(b)] The modular group $\Gamma$= PSL(2, $\mathbb{Z})$ acts as a group of automorphisms of $\mathcal{M}_3$.
\item[(c)] The general triangle has vertices $\frac{a}{c},\frac{a+b}{c+d}, \frac{b}{d}$.
\end{enumerate}
This forms a triangular tessellation of the upper half plane. Note that the triangle in (c) is just the image of the principal triangle under the M\"obius transformation corresponding to the matrix $\begin{pmatrix} 
  a&b\\
  c&d\\
  \end{pmatrix}$.

In \cite{S} it was shown that $\mathcal{M}_3$ is the {\it universal triangular map.}  This means that if $\mathcal M$ is any triangular map on an orientable surface then $\mathcal {M}$ is the quotient of $\mathcal{M}_3$ by a subgroup $\Lambda$ of the modular group. A map is regular if its automorphism group acts transitively on its {\it  darts}, (i.e. directed edges) and $\mathcal{M}_3/\Lambda$ is regular if and only if $\Lambda$ is a normal subgroup of $\Gamma$. The subgroup $\Lambda$ here is called a {\it map subgroup}.

\begin{figure}[t]
\begin{center}
\includegraphics[width=\linewidth]{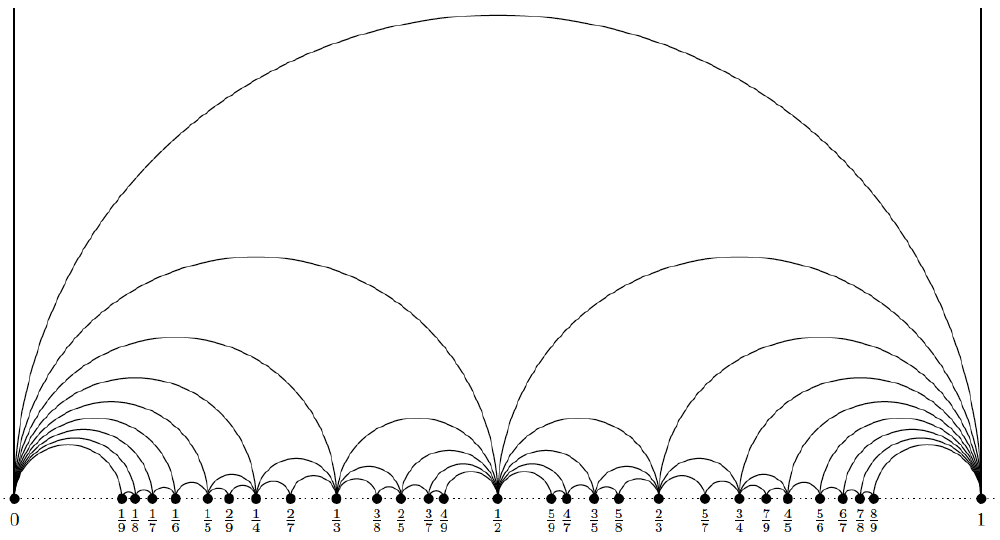}
\end{center}
\caption{The Farey map, (drawn by Jan Karaba\v s)}
\end{figure}
  (In general if  $\Delta(m,n)$ is the 
$(2,m,n)$ triangle group, then every map of type $(m,n)$ has the form $\hat{\mathcal{M}}/M$ where $\hat{\mathcal{M}}$  is the universal map of type $(m,n)$ and $M$ is a subgroup of $\Gamma$.  In  our case we are thinking of the modular group $\Gamma$ as being the $(2,3,\infty)$ group.   The infinity here means that we are not concerned with the vertex valencies;  we just require the map to be triangular.  For the general theory we refer to \cite{JS}.) 

We now consider the case when $\Lambda = \Gamma(n)$, the principal congruence mod $n$ of the modular group $\Gamma$ The corresponding maps are denoted by $\mathcal{M}_3(n)$. As $\Gamma(n)$ is a normal subgroup these maps are regular. 


\section{The map $\mathcal{M}_3(n)$} 

The map $\mathcal{M}_3(n)$ is a regular map that lies on the Riemann surface $\mathbb{H}/\Gamma(n)$.  
The  automorphism group of $\mathcal{M}_3(n)$  is $\Gamma/\Gamma(n)\cong $PSL(2, $\mathbb{Z}_n)$ of order  

$$\mu(n)=\frac{n^3}{2} \Pi_{p|n}(1-\frac{1}{p^2}).$$

Now $\mu(n)$ is the number of darts of $\mathcal{M}_3(n)$  so the number of edges of this map is $\mu(n)/2$, 
and the number of faces is equal to $\mu(n)/3$. Note that $\frac{1}{0}$ is joined to $\frac{k}{1}$ for 
$k=0\dots n-1$ so that $\frac{1}{0}$ has valency $n$ and by regularity every vertex has valency $n$. Thus 
the number of vertices is equal to $\mu(n)/n$.  For example, $\mu(5)=60, \mu(7)=168, \mu(11)=660$, so the 
numbers of vertices of $\mathcal{M}_3(n)$, for $n=5,7,11$, are 12, 24, 60, respectively. We can now use the 
Euler-Poincar\'e formula find the well-known formula for the genus $g(n)$ of $\mathcal{M}_3 (n)$; 
$$g(n)=1+\frac{n^2}{24}(n-6)\Pi_{p|n}(1-\frac{1}{p^2}) \eqno(1).$$

\subsection{Farey coordintes for $\mathcal{M}_3(n)$} If $(a,c,n)=1$ then the projection of $\frac{a}{c}$ from 
$\mathcal{M}_3$ to $\mathcal{M}_3(n)$ is denoted by $[\frac{a}{c}$], or simply  $\frac{a}{c}$ when there 
is no room for ambiguity and is called a {\it Farey fraction} or {\it Farey coordinate} when used to denote 
a vertex of $\mathcal{M}_3(n)$. If $\frac{a}{c}$ is a Farey fraction, $a,c$ can be 
thought of as elements of $\mathbb{Z}_n$, where $(a,c,n)=1$ and $\frac{a}{c} = \frac{-a}{-c}$ 
For example, in \cite{SS} we constructed $\mathcal{M}_3(5)$ which is an icosahedron, discussed below in \S 4.


\section{The quasi-icosahedral structure of Farey maps}  

We now show that every Farey map has a {\it quasi-icosahedral} structure. Let us give some definitions from\cite{SS}.  

\begin{enumerate}
 \item The {\it (graph-theoretic) distance}  $\delta(f_1,f_2)$ between two vertices $f_1$ and $f_2$ of a graph is the length of the shortest path joining these two vertices. 
\item A {\it Farey circuit} is a sequence of Farey fractions $f_1, f_2,\dots f_k$ where $f_i$ is joined by an edge to $f_{i+1}$ with the indices taken mod $k$.
\item A {\it pole} of a Farey map is any vertex with coordinates $\frac{a}{0}$.  
\end{enumerate}

The following theorem was proved in \cite{SS}.

\begin{Th} Let $\frac{a}{c}, \frac{b}{d}$ be distinct vertices of $\mathcal{M}_3(p)$, where $p$ is prime, and let $\Delta=ad-bc$.  Then:  
\begin{displaymath}
\delta\left(\frac{a}{c},\frac{b}{d}\right)=
\begin{cases}
1 & \mbox{if and only if $|\Delta|=1$,}\\

2 & \mbox{ if and only if $|\Delta|\not= 0,\pm 1$}\\

3 & \mbox{if and only if $\Delta=0$,}\\
\end{cases} 
\end{displaymath}
\end{Th}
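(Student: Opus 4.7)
My approach is to exploit the transitivity of the action of $\Gamma/\Gamma(p)\cong\PSL(2,\mathbb{Z}_p)$ on the vertices of $\mathcal{M}_3(p)$ to reduce everything to the single case where the first vertex is the pole $\frac{1}{0}$. Under this normalisation $\Delta\equiv d\pmod p$, so the three hypotheses $|\Delta|=1$, $\Delta\neq 0,\pm 1$, and $\Delta=0$ become conditions on $d$ alone and each implication reduces to a short computation on representatives in $\mathbb{Z}_p$.

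First I would identify the neighbours of $\frac{1}{0}$ in $\mathcal{M}_3(p)$: the adjacency $\frac{1}{0}\sim\frac{x}{y}$ forces $y\equiv\pm 1\pmod p$, so they are exactly the $p$ vertices $\frac{k}{1}$ for $k\in\mathbb{Z}_p$. The distance-$1$ equivalence is then immediate from the adjacency definition. For the distance-$2$ case I would produce a common neighbour of $\frac{1}{0}$ and $\frac{b}{d}$ by solving $kd-b\equiv\pm 1\pmod p$ for $k$; primality of $p$ together with $d\not\equiv 0$ makes $d$ invertible, so $k=(b+1)d^{-1}$ works and hence $\delta\leq 2$, while $d\not\equiv\pm 1$ rules out $\delta=1$.

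The case $\Delta\equiv 0$ is the heart of the argument. Here $\frac{b}{d}=\frac{b}{0}$ with $b\not\equiv\pm 1$ (otherwise the two vertices would coincide as $\frac{a}{c}=\frac{-a}{-c}$). To rule out $\delta\leq 2$, note that any common neighbour of these two poles must be of the form $\frac{k}{1}$ and would force $-b\equiv\pm 1$, contradicting the hypothesis. For the matching upper bound I would exhibit the explicit length-$3$ path
\[ \frac{1}{0}\;\longrightarrow\;\frac{0}{1}\;\longrightarrow\;\frac{-1}{b^{-1}}\;\longrightarrow\;\frac{b}{0}, \]
which is well defined because $b^{-1}$ exists in $\mathbb{Z}_p$, and verify each of the three adjacencies by a one-line determinant computation.

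The main subtlety is that primality of $p$ enters precisely where needed, namely to invert $d$ in the distance-$2$ step and $b$ in the distance-$3$ step; without a field structure on $\mathbb{Z}_p$ both of these constructions can fail and distances between poles can grow, which is exactly why the theorem is confined to prime moduli.
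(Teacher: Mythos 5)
The paper does not actually prove this theorem: it states it and cites the companion paper \cite{SS} (Singerman--Strudwick, ``The Farey maps modulo $n$'') for the proof, so there is no in-text argument to compare yours against. Judged on its own, your proof is correct and complete. The reduction to $\frac{a}{c}=\frac{1}{0}$ via the transitive, distance-preserving, $\Delta$-preserving action of $\PSL(2,\mathbb{Z}_p)$ is sound; the identification of the neighbours of $\frac{1}{0}$ with $\{\frac{k}{1}\}$ is right; the midpoint $k=(b\pm1)d^{-1}$ in the distance-$2$ case uses the invertibility of $d$ exactly where the hypothesis $\Delta\not\equiv 0$ provides it; the exclusion of $\delta\le 2$ between distinct poles is clean; and the explicit path $\frac{1}{0}\to\frac{0}{1}\to\frac{-1}{b^{-1}}\to\frac{b}{0}$ verifies by three one-line determinant checks. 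One minor point worth making explicit in a polished writeup: the three conditions on $\Delta$ partition all possibilities and the three stated distance values are pairwise distinct, so proving each forward implication automatically yields all three biconditionals; you rely on this but do not say it. You also correctly flag why primality is essential, which matches the paper's observation (in \S 4) that $\mathcal{M}_3(n)$ has diameter $3$ for $n\ge 5$, a statement that is more delicate for composite $n$ precisely because the inverses you use may fail to exist.
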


Now let $\frac{b}{d}=\frac{1}{0}$ and call this the {\it north pole}  $N$ of $\mathcal{M}_3(p).$  Then by the above theorem $\delta(N,\frac{a}{c}) =1$, if and only if $c=1$, $\delta(N, \frac{a}{c})=2$ if and only if $c\not= 0, \pm 1,$, and  $\delta(N, \frac{a}{c})=3$ if and only if $c=0$. That is, the vertices of $\mathcal{M}_3(p)$ form four disjoint subsets: the north pole $N$ at $\frac{1}{0}$, a circuit of lengh $n$ whose graph-theoretic distance from $N$  is 1, another circuit at distance 2 from $N$, and other poles at distance 3 from $N$.

In \cite{SS} it was also shown that $\mathcal{M}_3(n)$  has diameter 3 for all $n \geq 5$.

\subsection{The icosahedron} $\mathcal{M}_3(5)$ is an icosahedron \cite{SS} with vertex set 
$$
\left\{ \frac{1}{0}, \frac{2}{0}, \frac{0}{1}, \frac{1}{1}, \frac{2}{1}, \frac{3}{1},\frac{4}{1},\frac{0}{2},\frac{1}{2}, \frac{2}{2},\frac{3}{2}, \frac{4}{2} \right\}
$$
see Figure 2. The north pole $N$ at $\frac{1}{0}$, there is a Farey circuit of length 5 of points whose denominator is equal to 1 and have distance 1 from $N$ and a second circuit of length 5 of points whose denominator is equal to 2 and have distance 2 from $N$.  We also have the pole $\frac{2}{0}$ at distance 3 from $N$.
\begin{figure}[ht!]
\centering
\includegraphics[width=0.75\linewidth]{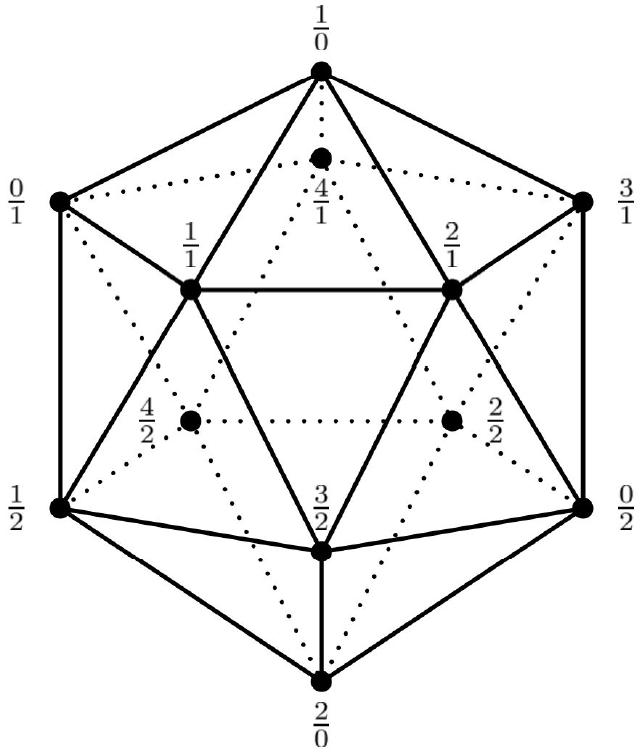}
\caption{Drawing of $\mathcal{M}_3(5)$ with Farey coordinates}
\end{figure}

\bigskip

For a {\it quasi-icosahedral} structure on $\mathcal{M}_3(p)$ let again $N=\frac{1}{0}\in \mathcal{M}_3(p)$.  
The circuit of points of distance 1 from $N$ is 
$$
S_1(p)= \frac{0}{1}, \frac{1}{1},\dots ,\frac{p-1}{1}
$$

The circuit of points at distance 2 from $N$ is more complicated and we now construct it. To make the 
calculation a bit clearer we start with the example $p=7$. From Theorem 1, we see that the points of 
distance 2 from $\frac{1}{0}$ have the form $\frac{b}{d}$ where $d=\pm 2$ or $\pm 3$. Thus the 
points $S(7)=\frac{1}{3}, \frac{1}{2}, \frac{2}{3}$ all have distance 2 from $N$.  As the transformation 
$t\mapsto t+1$ fixes $N$ and preserves distance, all points in $S(7)+k$ have distance 2 from $N$, for 
$k=1\dots ,6.$  Thus we find the Farey circuit 
$$ 
S_2(7)=
\frac{1}{3}, \frac{1}{2}, \frac{2}{3}, \frac{4}{3},\frac{3}{2},\frac{5}{3},\frac{0}{3},
\frac{5}{2},\frac{1}{3}, \frac{3}{3}, \frac{0}{2}, \frac{4}{3},\frac{6}{3},\frac{2}{2},
\frac{0}{3}, \frac{2}{3},\frac{4}{2}, \frac{3}{3},\frac{5}{3}, \frac{6}{2},\frac{6}{3}
$$
consisting points of distance 2 from $N$, see Figure 3.  We now generalize this.  Let $p\ge 5$ be a prime and let  
$$
S(p)=\frac{1}{(p-1)/2},\frac{1}{(p-3)/2}.\dots,\frac{1}{3},\frac{1}{2},\frac{2}{3},\dots, \frac{(p-3)/2}{(p-1)/2} 
$$
Then

\begin{Th}
The concatenation of sequences 
$$
S_2(p)=S (S+1) (S+2) \dots (S+p-1)
$$
where $S=S(p)$, is the Farey circuit consisting of points of distance 2 from $N$. 
The length of $S_1(p)$ is $p$ and the length of $S_2(p)$ is $p(p-4)$. 
There are $(p-1)/2$ poles.
\end{Th}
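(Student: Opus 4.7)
The plan is to exploit the fact that the translation $T \colon t \mapsto t+1$ is an automorphism of $\mathcal{M}_3(p)$ that fixes $N = \frac{1}{0}$; consequently it preserves graph-theoretic distance from $N$ and sends edges to edges. Hence if $S(p)$ is a path of distance-$2$ vertices then every translate $S(p)+k$ is too, and to show that the concatenation $S_2(p)$ is a closed circuit it suffices to check one seam, say between $S(p)$ and $S(p)+1$; the remaining seams (including the one closing $S(p)+(p-1)$ back to $S(p)$) then follow automatically by applying $T^k$.

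First I would verify that $S(p)$ itself is a path by computing the determinant $\Delta = ad-bc$ for each consecutive pair. In the initial segment $\frac{1}{k},\frac{1}{k-1}$ one has $\Delta=-1$; at the bridge $\frac{1}{2},\frac{2}{3}$ also $\Delta=-1$; and in the tail segment $\frac{a}{a+1},\frac{a+1}{a+2}$ the identity $a(a+2)-(a+1)^2=-1$ again yields $\Delta=-1$. Every denominator appearing in $S(p)$ lies in $\{2,3,\dots,(p-1)/2\}$, hence is not $\equiv 0,\pm 1 \pmod p$, so Theorem~1 places each vertex at distance $2$ from $N$. For the seam I would compute that the last entry of $S(p)$ is $\frac{(p-3)/2}{(p-1)/2}$ and the first entry of $S(p)+1$ is $\frac{(p+1)/2}{(p-1)/2}$, giving $\Delta = -(p-1) \equiv 1 \pmod p$, so there is indeed an edge.

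Counting the entries of $S(p)$ as $(p-3)/2 + (p-5)/2 = p-4$ gives $|S_2(p)|=p(p-4)$. To show the circuit visits \emph{every} distance-$2$ vertex, note that any such vertex can be normalised as $\frac{a'}{d'}$ with $d' \in \{2,\dots,(p-1)/2\}$, and each such $d'$ appears as a denominator in $S(p)$. If $\frac{a}{d'} \in S(p)$, then $\frac{a+kd'}{d'} \in S(p)+k$, and as $k$ ranges over $\mathbb{Z}_p$ the numerator $a+kd'$ cycles through all of $\mathbb{Z}_p$ since $p$ is prime and $d' \not\equiv 0$. Finally, the poles of $\mathcal{M}_3(p)$ are the vertices $\frac{a}{0}$ with $a \in \mathbb{Z}_p^{*}$ modulo the identification $\frac{a}{0} = \frac{-a}{0}$, yielding $(p-1)/2$ of them.

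All of this is routine once the translation symmetry is in place; the main point requiring care is that $S_2(p)$ is \emph{not} a simple cycle through the $p(p-3)/2$ distance-$2$ vertices but revisits those whose normalised denominator appears twice in $S(p)$ (namely the values $d \in \{3,\dots,(p-1)/2\}$, which contribute to both halves of $S(p)$, as opposed to $d=2$ which appears only once). The arithmetic check that $p \cdot 1 + p(p-5)/2 \cdot 2 = p(p-4)$ will be the reconciliation between the vertex count and the stated length.
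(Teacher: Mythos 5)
Your proposal follows essentially the same route as the paper: use the translation $t\mapsto t+1$ (which fixes $N$ and preserves distance) to reduce everything to the single block $S(p)$ and one seam, verify adjacency by determinant computations, and count. Your seam determinant $-(p-1)\equiv 1\pmod p$ matches the paper's $-p+1$.

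Two small places where you go beyond the paper's argument, both to good effect. First, the paper shows that $S_2(p)$ \emph{is} a Farey circuit whose vertices lie at distance $2$, but does not explicitly verify that it visits \emph{every} distance-$2$ vertex; your normalisation argument (each admissible denominator $d'\in\{2,\dots,(p-1)/2\}$ appears in $S(p)$, and translating by $k$ sweeps the numerator through all of $\mathbb{Z}_p$) closes that gap. Second, you flag that $S_2(p)$ is not a simple cycle: denominators $3,\dots,(p-1)/2$ occur in both halves of $S(p)$ while $2$ occurs once, and your reconciliation $p\cdot 1 + p\cdot\frac{p-5}{2}\cdot 2 = p(p-4)$ against the $p\cdot\frac{p-3}{2}$ distinct distance-$2$ vertices is exactly the bookkeeping the paper leaves implicit. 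Neither of these changes the method, but both make the claim ``\emph{the} Farey circuit consisting of points of distance $2$'' honest, so they are worth including.
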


\begin{proof}
We first observe that the points in $S_2(p)$ do have distance 2 from $N$. Indeed, the points $\frac{1}{k}$ and $\frac{m-1}{m}$ 
for $2\le k,m\le \frac {p-1}{2}$  have distance 2 from $N=\frac{1}{0}$ as $\frac{1}{k} \leftrightarrow\frac{0}{1}$ and 
$\frac{m-1}{m}\leftrightarrow\frac{1}{1}$ and none of these points have distance 1 from $\frac{1}{0}$.  

The transformation $t\mapsto t+1$ fixes $\frac{1}{0}$ and preserves distance so that all points in $S+k$ have distance 2 
from $N=\frac{1}{0}$.  We now show that $S_2(p)$ is a Farey circuit. Clearly there are edges between $\frac{1}{k}$ 
and $\frac{1}{k+1}$ for $k\ge 2$ and between $\frac{k}{k+1}$ and $\frac{k+1}{k+2}$ for $k\ge 2$. So, we only need 
to show that there is an edge between the last vertex in $S+k$ and the first vertex in $S+k+1$.  The last vertex of 
$S+k$ is $k+\frac{(p-3)/2}{(p-1)/2}=\frac{(p-3+kp-k)/2}{(p-1)/2}$.  
The first vertex of $S+k+1$ is $k+1+\frac{1}{(p-1)/2}=\frac{(kp-k+p+1)/2}{(p-1)/2}$.

As   $[(p-3+kp-k)/2][(p-1)/2]-[(kp-k+p+1)/2][(p-1)/2]=-p+1$ we see that the last vertex of $S+k$ is adjacent to the 
first vertex of $S+k+1$. Thus,  $S_2(p)$ is a Farey chain consisting of points of distance 2 from $\frac{1}{0}.$  

Now $S_1(p)$ clearly has length $p$, and as the set $S_2(p)$ has length $p-4$, $S_2(p)$ has length $p(p-4)$. 
The poles are $\frac{1}{0}, \frac{2}{0}$,.... with $\frac{k}{0}=\frac{-k}{0}$, thus, the number of poles 
is $\frac{p-1}{2}$.
\end{proof}


\section{Drawing $\mathcal{M}_3(7)$}

The map $\mathcal{M}_3(7)$ has 24 vertices with coordinates
$$
\left\{ \frac{1}{0}, \frac{2}{0}, \frac{3}{0}, \frac{0}{1}, \frac{1}{1},  \frac{2}{1},\frac{3}{1}, \frac{4}{1}, 
\frac{5}{1},\frac{6}{1}, \frac{0}{2}, \frac{1}{2}, \frac{2}{2},\frac{3}{2}, \frac{4}{2},\frac{5}{2},
\frac{6}{2},\frac{0}{3}, \frac{1}{3}, \frac{2}{3},  \frac{3}{3},\frac{4}{3},\frac{5}{3},\frac{6}{3} \right\}.
$$
The first circuit is $S_1(7)=\frac{0}{1}, \frac{1}{1}, ...,\frac{6}{1}$, and we draw a circle $C_1(7)$, centre 
$\frac{1}{0}$  and radius 1, containing it. We draw a circle $C_2(7)$, centre $\frac{1}{0}$ radius 2, containing 
the points of $S_2(7)$.  In Figure 3, $C_2(7)$ passes through the points $\frac{1}{3}, \frac{6}{3}, \frac{6}{2}, \frac{5}{3}\cdots$. 

Finally, we can draw a simple close curve $C_3(7)$ exterior to both $C_1(7)$ and $C_2(7)$ which contains the poles 
$\frac{2}{0}$ and $\frac{3}{0}$, see the dotted line in Figure 3. The pole $\frac{2}{0}$ is a vertex of seven triangles 
whose base is on the second circuit. One of these triangles is $\frac{6}{3},\frac{2}{0},\frac{1}{3}$ and the other 
6 are found by rotation, i.e., by adding 1,2,3,4,5,6 to these three points. The pole $\frac{3}{0}$ is a vertex of seven 
quadrilaterals, which are unions of two Farey triangles, and also have one edge on $C_2(7)$.  One of these is 
$\frac{1}{3},\frac{5}{2},\frac{3}{0},\frac{1}{2}$ and we get the other six by adding 1,2,3,4,5,6. 
\begin{figure}[ht!]
\centering
\includegraphics[width=\linewidth]{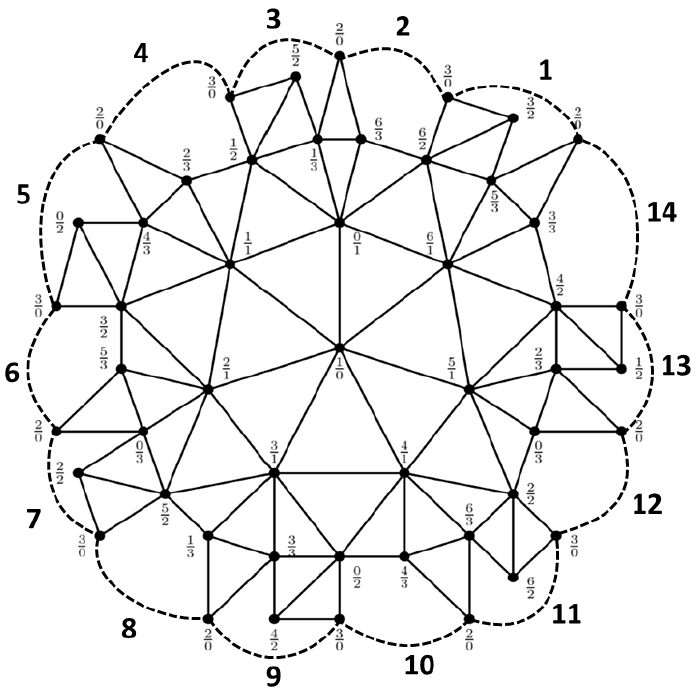}
\caption{Drawing of $\mathcal{M}_3(7)$ with Farey coordinates}
\end{figure}


\section{The 14-sided polygon}

We now show how to obtain a 14-sided polygon out of the Farey map $\mathcal{M}_3(7)$ with 
the same side-pairings as Klein's polygon.

We  join each vertex with label $\frac{2}{0}$ to the next instance of $\frac{3}{0}$ on $C_3(7)$ 
in an anticlockwise direction, and similarly from $\frac{3}{0}$ to $\frac{2}{0}$.  We then get a 
14-sided polygon the sides of which are identified according to their Farey coordinates. Consider 
the side labelled 1. This surrounds vertices, in an anticlockwise order, with Farey coordinates 
$\frac{2}{0}$, $\frac{5}{3}$, $\frac{3}{2}$ and $\frac{3}{0}$. The side labelled 6 surrounds 
vertices in a clockwise order with exactly the same Farey coordinates. Thus we identify the side 
1 with the side 6, orientably. Similarly, we identify 3 with 8, 5 with 10, 7 with 12, 9 with 14, 
11 with 2 and 13 with 4. 

This is exactly the same side-pairing as found by Klein from his 14-sided polygon which shows that 
our 14-sided polygon does give the Klein quartic. Our way of finding the side identifications is much 
more straightforward than the method used in Klein's paper which we will summarize in \S 8.


\section{Farey Coordinates for the Klein map}

 The Klein map $\mathcal{K}$ is the drawing of his Riemann surface of genus 3. It is the 14-sided 
polygon drawn on page 126 of \cite{K}, or page 320 of \cite{L}. See Figure 4. It has 24 vertices 
of valency 7 and we want to assign one of the 24 Farey coordinates mod 7 to each vertex. First,  
we assign the Farey coordinate $\frac{1}{0}$ to the centre point.  We note that there are two 
circuits of seven vertices centred at $\frac{1}{0}$. We give the first circuit the Farey coordinates 
$\frac{0}{1},  \frac{1}{1},\dots \frac{6}{1}$. If we extend the perpendicular bisector from 
$\frac{1}{0}$ to the hyperbolic line between  $\frac{0}{1}$ and $\frac{1}{1}$ we get to another 
vertex of valency seven to which we assign the coordinate $\frac{0+1}{1+1} = \frac{1}{2}$. 
Similarly, we extend 
the perpendiculsr bisector from $\frac{1}{0}$ to the hyperbolic line between $\frac{1}{1}$ and 
$\frac{1}{2}$ to a vertex of valency seven which we give the Farey coordinate $\frac{3}{2}$. 
By continuing, we find all vertices with Farey coordinates 
$\frac{1}{2}, \frac{3}{2}, \frac{5}{2}, \frac {0}{2}, \frac{2}{2}, \frac{6}{2}$.  Thus we have 
now found all vertices with Farey coordinates $\frac{x}{i}$ for $i=1,2$ and we just have to find 
the vertices with Farey ccordinates $\frac{x}{0}$ or $\frac{x}{3}$ which lie on the boundary of 
$\mathcal{K}$.  After Klein's identifications shown in Figure 3, we see that the 14 corners of 
$\mathcal{K}$ belong in to two classes, which we can label $\frac{2}{0}$, $\frac{3}{0}$.  
Between any two of these vertices there is precisely one more vertex, thus giving seven 
more vertices of valency 7. We  can assign Farey coordinates of the form $\frac{x}{3}$ just by 
reading them off from figure 3. In fact, each $\frac{x}{3}$ occurs exactly twice and we can 
now pair sides of $\mathcal{K}$ that have exactly the same value of $x$. Again, this gives exactly the 
same side pairing as Klein found. We thus have two methods, in sections 7 and 8, of using Farey 
coordinates to get Klein's pairings just by observation. 

Figure 4 gives a description of Klein's work using Farey coordinates. We see that each of the 14 sides 
of the boundary of $\mathcal{K}$ consists of a Farey edge and a non-Farey edge. The segment from 
$\frac{2}{0}$ to $\frac{x}{3}$ is a Farey edge whilst the segment from $\frac{x}{3}$ to $\frac{3}{0}$ 
is not a Farey edge. There is no automorphism of $\mathcal{K}$ mapping one segment to the other 
since all elements of $\Gamma$ map Farey edges to Farey edges.

\begin{figure}[ht!]
\centering
\includegraphics[width=\linewidth]{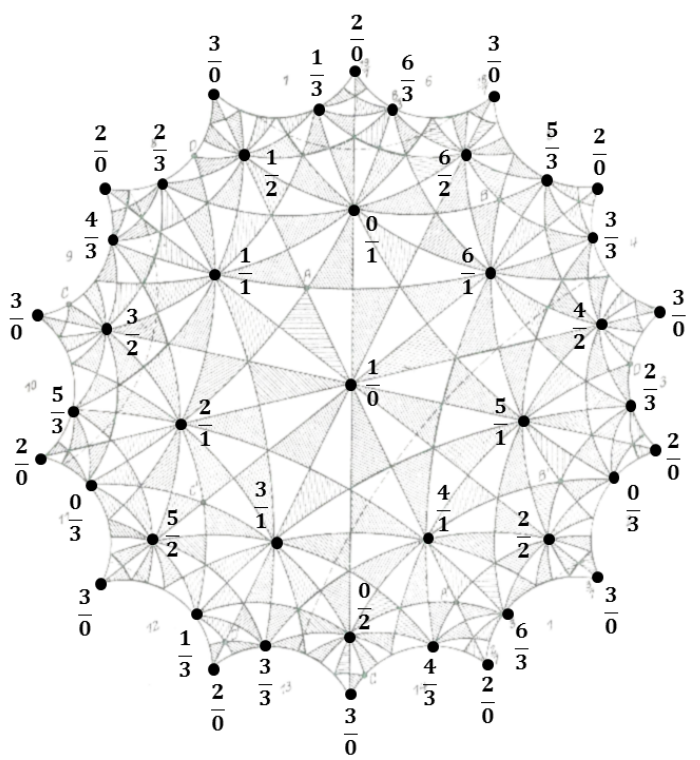}
\caption{Farey coordinates on the Klein surface}
\end{figure}


\section{What Klein did}

Here we review Klein's original construction of his fundamental domain and show how this 
construction can be interpreted in terms of the Farey machinery we described above. 

By the end of section 10 of \cite{K} Klein had obtained the equation of his quartic curve 
and in section 11 he started to discuss the Riemann surface of this algebraic curve and 
also the corresponding map. In fact, this was one of the first publications to use maps 
(or in today's language {\it dessins d'enfants}) in a profound way, pointing up the deep 
correspondence between maps and algebraic curves. This correspondence was not 
properly understood until Grothendieck's {\it Esquisse d'un programme} some 105 years 
later \cite{G}.

Klein's quite complicated construction of the fundamental domain of his surface comes 
from considering fundamental regions for subgroups of indices 7 and 8 in the modular group.  
In section 12, he writes "In order not to make these considerations too abstract I will 
resort to the $\omega$-plane;" this is the upper-half plane where the modular group acts.  
He constructs a hyperbolic polygon corresponding to his fourteen-sided polygon describing 
his surface. He then draws semicircles (hyperbolic lines) in the upper-half plane with 
rational vertices, which correspond to the edges of his 14-sided polygon. Consider now this polygon 
as being inscribed in the unit disc so the vertices all lie on the boundary circle.  As the unit 
disc is conformally equivalent to the upper-half plane the boundary circle corresponds to 
the real axis and so, every point of the circle has some real coordinate.  He starts with one edge 
(labelled 1) of his fourteen-sided polygon corresponding to two consecutive edges of the 
polygon in the upper-half plane with vertices $\frac{2}{7}, \frac{1}{3}$ and 
$\frac{1}{3},\frac{3}{7}$. (As we already noted above, $\frac{2}{7}, \frac{1}{3}$ is a 
Farey edge while $\frac{1}{3},\frac{3}{7}$ is not, therfore we cannot map one to the 
other by an element of $\Gamma$). A second edge (labelled 6) is given as the pair 
of consecutive edges $\frac{18}{7}, \frac{8}{3}$ and $\frac{8}{3},\frac{19}{7}$. The 
M\"obius transformation corresponding to the matrix 
$$
\begin{pmatrix}113&-35\\42& -13\end{pmatrix}
$$
 in $\Gamma(7)$ maps edge 1 (i.e. $\frac{2}{7}, \frac{1}{3}, \frac{3}{7}$) to edge 6 
(i.e. $\frac{18}{7}, \frac{8}{3}, \frac{19}{7}$), and one more explicit example of edge pairing 
is given. He stated that in total seven such matrices can be found that give all the side pairings. 
We feel that our technique of just using Farey coordinates is much easier.


\section{$\mathcal{M}_3(11)$}

About a year after Klein wrote his paper \cite{K11} on the quartic curve, he wrote a further 
paper with exactly the same title but with 7 replaced by 11, basically, he was considering 
$\mathbb{H^*}/\Gamma(11)$. In that paper he did not draw a diagram of the fundamental 
region equivalent to his drawing of  $\mathcal{K}$ in \cite{K1}. Here we show how to draw 
the Farey map  $\mathcal{M}_3(11)$ in a similar way as we drew $\mathcal{M}_3(7)$. 
This Farey map will be embedded  in the surface $\mathbb{H^*}/\Gamma(11)$. 

The first circuit of vertices of distance 1 from $\frac{1}{0}$ is 
$$
S_1(11)=\frac{0}{1}, \frac{1}{1}, \dots \frac{10}{1}.
$$
The consider the sequence of vertices 
$$
S(11)=\frac{1}{5},\frac{1}{4},\frac{1}{3},\frac{1}{2},\frac{2}{3},\frac{3}{4},\frac{4}{5}
$$
and then the second circuit is 
$$
\mathcal{S}_2(11) =S(11)\ (S(11)+1) \dots (S(11)+10).
$$

The automorphism group of  $\mathcal{M}_3(11)$ is PSL(2,11) of order 660 so the Farey map 
$\mathcal{M}_3(11)$ has 660/2=330 edges, 660/3=220 triangles and 660/11=60 vertices. The 
Farey coordinates of the vertices are $\frac{1}{0}, \frac{2}{0}, \frac{3}{0}, \frac{4}{0}, \frac{5}{0}$ 
and all Farey fractions of the form $\frac{r}{s}$ for $r$ =$0$ to $10$ and $s$=0 to 5.

To draw the map we just need to find the 220 triangular faces.  Because $z\mapsto z+1$ is an 
automorphism of  $\mathcal{M}_3(11)$, which acts as a rotation about the centre $\frac{1}{0}$ 
of the map, we see that this map is divided into eleven congruent sectors each containing 
$220/11=20$ triangles. We construct a sector, starting from the central trangle 
$\frac{1}{0}, \frac{0}{1}, \frac{1}{1}$ and adding 19 distinct triangles whose vertices lie 
$S(11)$. The 8 of these 19 triangles have a vertex on the first circuit $S_1(11)$ ($\frac{0}{1}$ or 
$\frac{1}{1}$ in particular and are uniquely determined. For the 11 triangles in the outer region 
of $C_2(11)$ there are several choices satisfying the condition that are distinct under rotation 
about $\frac{1}{0}$. 

Figure 5 shows one such solution as the union of the closures of these 20 triangles. The actual Farey coordinates are 
\begin{center}
\begin{tabular}{cccccc} 
$P_0 = \frac{1}{0}$ & $P_1 = \frac{0}{1}$ & $P_2 = \frac{1}{5}$ & $P_3 = \frac{1}{4}$ & $P_4 = \frac{1}{3}$ & $P_5 = \frac{2}{5}$ \\ \\ 
$P_6 = \frac{1}{2}$ & $P_7 = \frac{5}{0}$ & $P_8 = \frac{6}{2}$ & $P_9= \frac{7}{4}$ & $P_{10} = \frac{3}{5}$ & $P_{11} = \frac{6}{3}$ \\ \\ 
$P_{12} = \frac{4}{0}$ & $P_{13} = \frac{2}{3}$ &  $P_{14} = \frac{6}{4}$ & $P_{15} = \frac{3}{0}$ & $P_{16}= \frac{3}{4}$ & $P_{17} = \frac{4}{2}$ \\ \\ 
$P_{18} = \frac{4}{5}$ & $P_{19} = \frac{2}{0}$ & $P_{20} = \frac{6}{5}$ & $P_{21} = \frac{1}{1}$
\end{tabular}
\end{center}

\begin{figure}
\begin{center}
\includegraphics [width=0.9\linewidth]{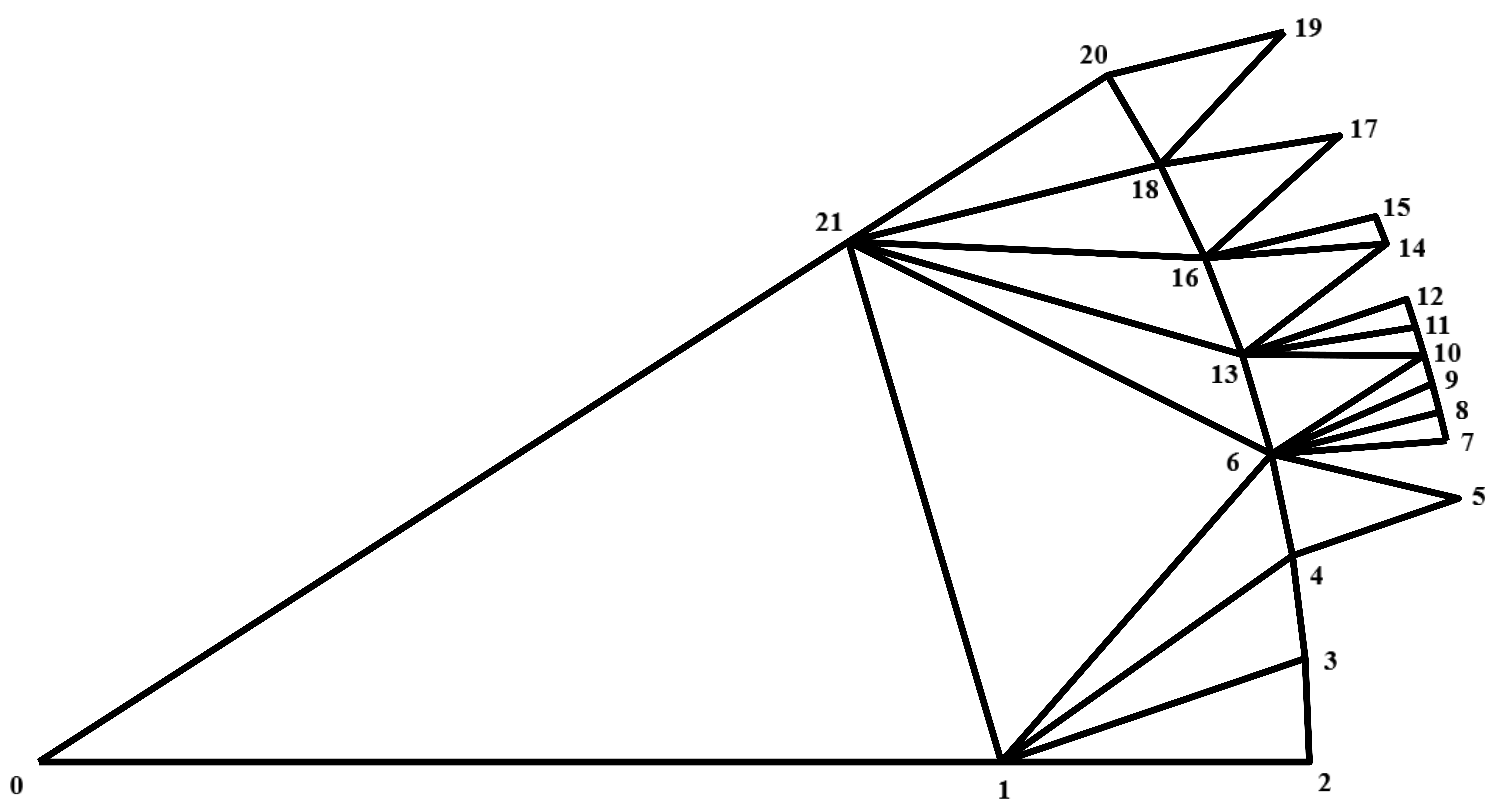}
\end{center}
\caption{The set $W$}
\end{figure}
 
Now let 
$$
W^*=W\cup (W+1)\cup \dots (W+10)
$$
where the sector $W+k$ is obtained from $W$ through the transformation $z\mapsto z+k$. 
Then $W^*$ is the union of 220 triangles as required and its boundary is a polygon with 
$11\times 18=198$. A diagram of $W^*$ is given in Figure 6. 

\begin{figure}
\begin{center}
\includegraphics[width=0.9\linewidth]{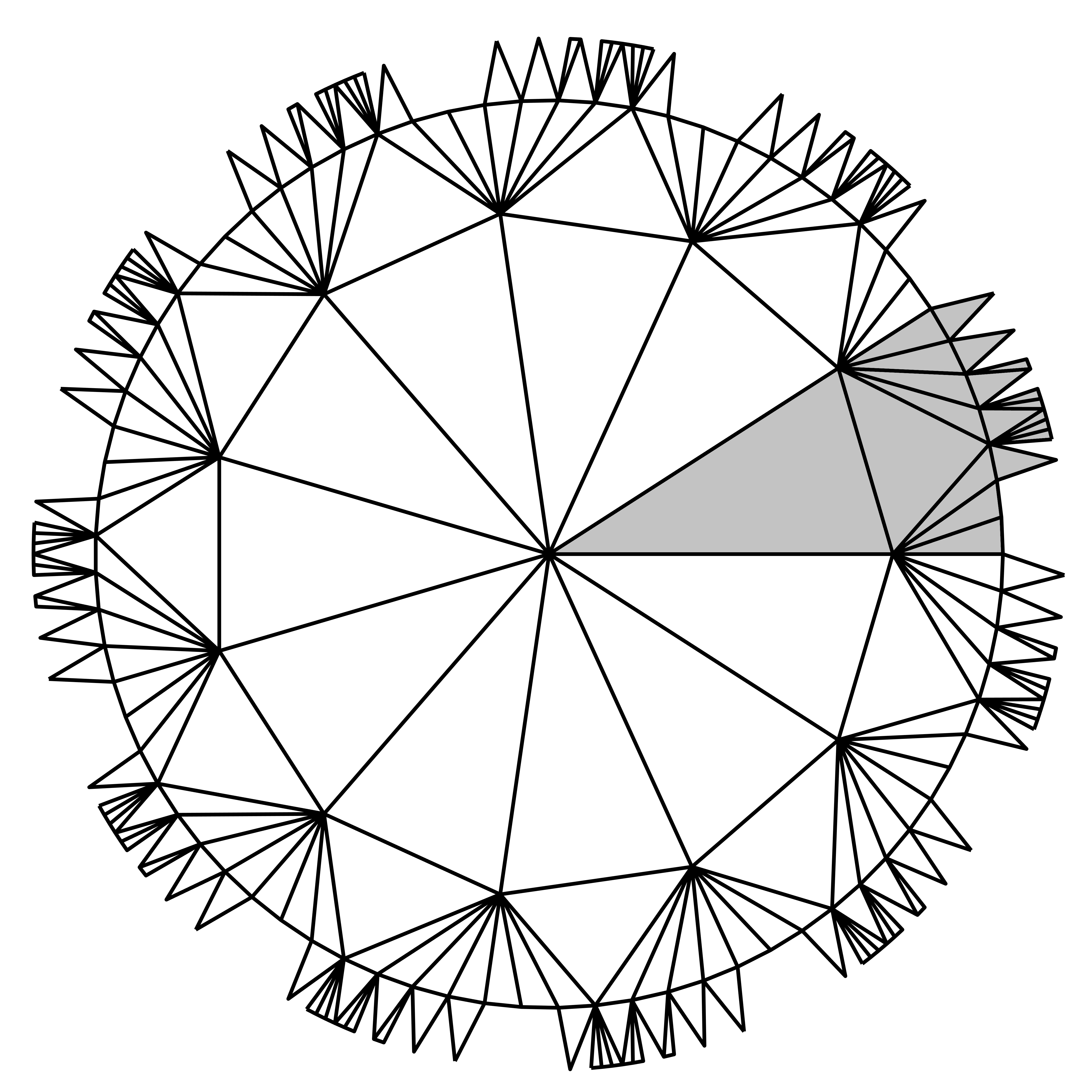}
\end{center}
\caption{The set $W^*$}
\end{figure}

Table 1 in the Appendix shows a list of the 198 boundary vertices of $W^*$ arranged in 11 columns. The $k$th 
column is just the first column  plus $(k-1)$. We now notice that we have an orientable side pairing. 
For example, the first edge 
in row 1 going from $\frac{1}{5}$ to $\frac{1}{4}$  is paired with the edge in row 5 going from 
$\frac{1}{4}$ to $\frac{1}{5}$,  the next edge in row 1 going from $\frac{1}{4}$ to $\frac{1}{3}$ 
is paired to the edge in row 8 going from $\frac{1}{3}$ to $\frac{1}{4}$.  Proceeding in this way we 
find that all the 198 edges of the polygon are paired orientably which shows that this polygon 
represents an orientable Riemann surface which must be  $\mathcal{M}_3(11)$.  As this 
corresponds to a surface subgroup of the $(2,3,11)$ triangle group  the Riemann-Hurwitz 
formula shows that this surface has genus 26.



\section*{Appendix}

\begin{table}[ht]
\caption{The boundary vertices of $W^*$. Each row represents a sector. The last vertex of a row is repeated as the first vertex of the row below.} \vskip 0.5cm
\setlength{\tabcolsep}{0.25em}
	\centering
		\begin{tabular}{ccccccccccccccccccc} 
			$\frac{1}{5}$ & $\frac{1}{4}$ & $\frac{1}{3}$ & $\frac{2}{5}$ & $\frac{1}{2}$ & $\frac{5}{0}$ & $\frac{6}{2}$ & $\frac{7}{4}$ & $\frac{3}{5}$ & $\frac{6}{3}$ & $\frac{4}{0}$ & $\frac{2}{3}$ & $\frac{6}{4}$ & $\frac{3}{0}$ & $\frac{3}{4}$ &	$\frac{4}{2}$ & $\frac{4}{5}$ & $\frac{2}{0}$ & $\frac{6}{5}$ \\ \cr 
			$\frac{6}{5}$ & $\frac{5}{4}$ & $\frac{4}{3}$ & $\frac{7}{5}$ & $\frac{3}{2}$ & $\frac{5}{0}$ & $\frac{8}{2}$ & $\frac{0}{4}$ & $\frac{8}{5}$ & $\frac{9}{3}$ & $\frac{4}{0}$ & $\frac{5}{3}$ & $\frac{10}{4}$ & $\frac{3}{0}$ & $\frac{7}{4}$ &	$\frac{6}{2}$ & $\frac{9}{5}$ & $\frac{2}{0}$ & $\frac{0}{5}$ \\ \cr 
			$\frac{0}{5}$ & $\frac{9}{4}$ & $\frac{7}{3}$ & $\frac{1}{5}$ & $\frac{5}{2}$ & $\frac{5}{0}$ & $\frac{10}{2}$ & $\frac{4}{4}$ & $\frac{2}{5}$ & $\frac{1}{3}$ & $\frac{4}{0}$ & $\frac{8}{3}$ & $\frac{3}{4}$ & $\frac{3}{0}$ & $\frac{0}{4}$ &	$\frac{8}{2}$ & $\frac{3}{5}$ & $\frac{2}{0}$ & $\frac{5}{5}$ \\ \cr 
			$\frac{5}{5}$ & $\frac{2}{4}$ & $\frac{10}{3}$ & $\frac{6}{5}$ & $\frac{7}{2}$ & $\frac{5}{0}$ & $\frac{1}{2}$ & $\frac{8}{4}$ & $\frac{7}{5}$ & $\frac{4}{3}$ & $\frac{4}{0}$ & $\frac{0}{3}$ & $\frac{7}{4}$ & $\frac{3}{0}$ & $\frac{4}{4}$ &	$\frac{10}{2}$ & $\frac{8}{5}$ & $\frac{2}{0}$ & $\frac{10}{5}$ \\ \cr 
			$\frac{10}{5}$ & $\frac{6}{4}$ & $\frac{2}{3}$ & $\frac{0}{5}$ & $\frac{9}{2}$ & $\frac{5}{0}$ & $\frac{3}{2}$ & $\frac{1}{4}$ & $\frac{1}{5}$ & $\frac{7}{3}$ & $\frac{4}{0}$ & $\frac{3}{3}$ & $\frac{0}{4}$ & $\frac{3}{0}$ & $\frac{8}{4}$ &	$\frac{1}{2}$ & $\frac{2}{5}$ & $\frac{2}{0}$ & $\frac{4}{5}$ \\ \cr 
			$\frac{4}{5}$ & $\frac{10}{4}$ & $\frac{5}{3}$ & $\frac{5}{5}$ & $\frac{0}{2}$ & $\frac{5}{0}$ & $\frac{5}{2}$ & $\frac{5}{4}$ & $\frac{6}{5}$ & $\frac{10}{3}$ & $\frac{4}{0}$ & $\frac{6}{3}$ & $\frac{4}{4}$ & $\frac{3}{0}$ & $\frac{1}{4}$ &	$\frac{3}{2}$ & $\frac{7}{5}$ & $\frac{2}{0}$ & $\frac{9}{5}$ \\ \cr 
			$\frac{9}{5}$ & $\frac{3}{4}$ & $\frac{8}{3}$ & $\frac{10}{5}$ & $\frac{2}{2}$ & $\frac{5}{0}$ & $\frac{7}{2}$ & $\frac{9}{4}$ & $\frac{0}{5}$ & $\frac{2}{3}$ & $\frac{4}{0}$ & $\frac{9}{3}$ & $\frac{8}{4}$ & $\frac{3}{0}$ & $\frac{5}{4}$ &	$\frac{5}{2}$ & $\frac{1}{5}$ & $\frac{2}{0}$ & $\frac{3}{5}$ \\ \cr 
			$\frac{3}{5}$ & $\frac{7}{4}$ & $\frac{0}{3}$ & $\frac{4}{5}$ & $\frac{4}{2}$ & $\frac{5}{0}$ & $\frac{9}{2}$ & $\frac{2}{4}$ & $\frac{5}{5}$ & $\frac{5}{3}$ & $\frac{4}{0}$ & $\frac{1}{3}$ & $\frac{1}{4}$ & $\frac{3}{0}$ & $\frac{9}{4}$ &	$\frac{7}{2}$ & $\frac{6}{5}$ & $\frac{2}{0}$ & $\frac{8}{5}$ \\ \cr 
			$\frac{8}{5}$ & $\frac{0}{4}$ & $\frac{3}{3}$ & $\frac{9}{5}$ & $\frac{6}{2}$ & $\frac{5}{0}$ & $\frac{0}{2}$ & $\frac{6}{4}$ & $\frac{10}{5}$ & $\frac{8}{3}$ & $\frac{4}{0}$ & $\frac{4}{3}$ & $\frac{5}{4}$ & $\frac{3}{0}$ & $\frac{2}{4}$ & $\frac{9}{2}$ & $\frac{0}{5}$ & $\frac{2}{0}$ & $\frac{2}{5}$ \\ \cr 
			$\frac{2}{5}$ & $\frac{4}{4}$ & $\frac{6}{3}$ & $\frac{3}{5}$ & $\frac{8}{2}$ & $\frac{5}{0}$ & $\frac{2}{2}$ & $\frac{10}{4}$ & $\frac{4}{5}$ & $\frac{0}{3}$ & $\frac{4}{0}$ & $\frac{7}{3}$ & $\frac{9}{4}$ & $\frac{3}{0}$ & $\frac{6}{4}$ &	$\frac{0}{2}$ & $\frac{5}{5}$ & $\frac{2}{0}$ & $\frac{7}{5}$ \\ \cr 
			$\frac{7}{5}$ & $\frac{8}{4}$ & $\frac{9}{3}$ & $\frac{8}{5}$ & $\frac{10}{2}$ & $\frac{5}{0}$ & $\frac{4}{2}$ & $\frac{3}{4}$ & $\frac{9}{5}$ & $\frac{3}{3}$ & $\frac{4}{0}$ & $\frac{10}{3}$ & $\frac{2}{4}$ & $\frac{3}{0}$ & $\frac{10}{4}$ &	$\frac{2}{2}$ & $\frac{10}{5}$ & $\frac{2}{0}$ & $\frac{1}{5}$ 
		\end{tabular} 
\end{table}


\begin{thebibliography}{99}

\bibitem{G}
Alexandre Grothendieck.
\newblock {Esquisse d'un programme. English translation of the 1984
  manuscript}.
\newblock {\em London Mathematical Society Lecture Note Series}, pages 5--48,
  1997.

\bibitem{JS}
Gareth~A Jones and David Singerman.
\newblock Theory of maps on orientable surfaces.
\newblock {\em Proceedings of the London Mathematical Society}, 3(2):273--307,
  1978.

\bibitem{K1}
Felix Klein.
\newblock Ueber die transformation siebenter ordnung der elliptischen
  functionen.
\newblock {\em Mathematische Annalen}, 14(3):428--471, 1878.

\bibitem{K}
Felix Klein.
\newblock {\em Gesammelte Mathematische Abhandlungen: Dritter Band: Elliptische
  Funktionen, Insbesondere Modulfunktionen Hyperelliptische und Abelsche
  Funktionen Riemannsche Funktionentheorie und Automorphe Funktionen}.
\newblock Springer Berlin Heidelberg, 1923.

\bibitem{L}
Silvio Levy.
\newblock {\em The eightfold way: the beauty of Klein's quartic curve},
  volume~35.
\newblock Cambridge university Press, 2001.

\bibitem{S}
David Singerman.
\newblock Universal tessellations.
\newblock {\em Rev. Mat. Univ. Complut. Madrid}, 1(1-3):111--123, 1988.

\bibitem{SS}
David Singerman and James Strudwick.
\newblock The Farey maps modulo $n$.
\newblock {\em arXiv preprint arXiv:1803.08851}, 2018. 
\newblock  (to appear in Acta Math. Univ. Comenianae) 

\bibitem{K11}
Felix Von~Klein.
\newblock Ueber die transformation elfter ordnung der elliptischen functionen.
\newblock {\em Mathematische Annalen}, 15(3):533--555, 1879.

\end{thebibliography}
\end{document}